\newtheorem{theorem}{Theorem}
\theoremstyle{plain}
\newtheorem{lemma}{Lemma}
\numberwithin{equation}{section}
\begin{document}
\title[On an infinite series for $(1+1/x)^{x}$]{On an infinite series for $%
(1+1/x)^{x}$}
\author{Cristinel Mortici}
\address{Valahia University of T\^{a}rgovi\c{s}te, Department of
Mathematics, Bd. Unirii 18, 130082 T\^{a}rgovi\c{s}te, Romania}
\email{cmortici@valahia.ro}
\author{Yue Hu}
\address{College of Mathematics and Informatics, Henan Polytechnic
University, \\
Jaozuo City, Henan 454000, P.R. China. }
\email{huu3y2@163.com}

\begin{abstract}
The aim of this paper is to construct a new expansion of $\left(
1+1/x\right) ^{x}$ related to Carleman's inequality. Our results extend some
results of Yang [Approximations for constant $e$ and their applications J.
Math. Anal. Appl. 262 (2001) 651-659].
\end{abstract}

\subjclass{26D15; 33B15}
\keywords{Constant $e$; inequalities; integral representation; sequences;
series}
\maketitle

\section{Introduction}

The following Carleman inequality \cite{carl}%
\begin{equation*}
\sum_{n=1}^{\infty }\left( a_{1}a_{2}\cdots a_{n}\right)
^{1/n}<e\sum_{n=1}^{\infty }a_{n},
\end{equation*}%
whenever $a_{n}\geq 0,$ $n=1,2,3,\ldots ,$ with $0<\sum_{n=1}^{\infty
}a_{n}<\infty ,$ has attracted the attention of many authors in the recent
past. We refer here to the following results%
\begin{equation*}
\sum_{n=1}^{\infty }\left( a_{1}a_{2}\cdots a_{n}\right)
^{1/n}<e\sum_{n=1}^{\infty }\left( 1-\frac{1}{2n+2}\right) a_{n}\ ,\ \ \ \ (%
\text{Bicheng and Debnath \cite{bicheng}})
\end{equation*}%
\begin{equation*}
\sum_{n=1}^{\infty }\left( a_{1}a_{2}\cdots a_{n}\right)
^{1/n}<e\sum_{n=1}^{\infty }\left( 1+\frac{1}{n+1/5}\right) ^{-1/2}a_{n}\ ,\
\ \ \ (\text{Ping and Guozheng \cite{ping}})
\end{equation*}%
\begin{equation*}
\sum_{n=1}^{\infty }\left( a_{1}a_{2}\cdots a_{n}\right)
^{1/n}<e\sum_{n=1}^{\infty }\left( 1-\frac{1}{2cn+4c/3+1/2 }%
\right)^{c}a_{n}\ ,\ \ \ \ (\text{Yang \cite{yang}}).
\end{equation*}%
Moreover, Yang \cite{yang} proved%
\begin{equation*}
\sum_{n=1}^{\infty }\left( a_{1}a_{2}\cdots a_{n}\right)
^{1/n}<e\sum_{n=1}^{\infty }\left( 1-\sum_{k=1}^{6}\frac{b_{k}}{\left(
n+1\right) ^{k}}\right) a_{n}
\end{equation*}%
with $b_{1}=1/2,$ $b_{2}=1/24,$ $b_{3}=1/48,$ $b_{4}=73/5760$, $%
b_{5}=11/1280,$ $b_{6}=1945/580608,$ then conjectured that if%
\begin{equation}
\left( 1+\frac{1}{x}\right) ^{x}=e\left( 1-\sum_{k=1}^{\infty }\frac{b_{k}}{%
\left( x+1\right) ^{k}}\right) ,\ \ \ x>0,  \label{S0}
\end{equation}%
then $b_{k}>0,$ $k=1,2,3,\ldots $ .

This open problem was recently solved by Yang \cite{yang}, who proved%
\begin{equation*}
\sum_{n=1}^{\infty }\left( a_{1}a_{2}\cdots a_{n}\right)
^{1/n}<e\sum_{n=1}^{\infty }\left( 1-\sum_{k=1}^{\infty }\frac{b_{k}}{\left(
n+1\right) ^{k}}\right) a_{n},
\end{equation*}%
whenever $a_{n}\geq 0,$ $n=1,2,3,\ldots ,$ with $0<\sum_{n=1}^{\infty
}a_{n}<\infty ,$ where $b_{0}=1$ and%
\begin{equation*}
b_{n}=\frac{1}{n}\left( -\sum_{k=0}^{n-2}\frac{b_{n-1-k}}{k+1}+\frac{1}{n+1}%
\right) .
\end{equation*}%
This conjecture was proved and discussed also by Yang \cite{yang2},
Gyllenberg and Yan \cite{gy}, and Yue \cite{yue}. In the final part of his
paper, Yang \cite{yang2} remarked that in order to obtain better results,
the right-hand side of (\ref{S0}) could be replaced by $e\left[
1-\sum_{n=1}^{\infty }\left( d_{n}/\left( x+\varepsilon \right) ^{n}\right) %
\right] ,$ where $\varepsilon \in (0,1]$ and $d_{n}=d_{n}\left( \varepsilon
\right) ,$ but informations about values of $\varepsilon $ are not provided.
We prove in this paper that $\varepsilon =11/12$ provides the fastes series $%
\sum_{n=1}^{\infty }\left( d_{n}/\left( x+\varepsilon \right) ^{n}\right) $
and also formulas for coefficients $d_{n}$ are given.

\section{The Results}

By truncation of the series%
\begin{equation}
\left( 1+\frac{1}{n}\right) ^{n}=e\left( 1-\frac{b_{1}}{n+1}-\frac{b_{2}}{%
\left( n+1\right) ^{2}}-...\right)  \label{S}
\end{equation}%
we obtain approximations of any desired accuracy $n^{-k}$. The first
approximation is%
\begin{equation}
\left( 1+\frac{1}{n}\right) ^{n}\approx e\left( 1-\frac{\frac{1}{2}}{n+1}%
\right)  \label{0}
\end{equation}%
but it is interesting to find the best approximation of the form%
\begin{equation*}
\left( 1+\frac{1}{n}\right) ^{n}\approx e\left( 1-\frac{a}{n+b}\right) ,%
\text{ \ \ as }n\rightarrow \infty .
\end{equation*}%
This problem was solved in \cite{m1}, where the best values $a=2,$ $b=\frac{%
11}{6}$ were found. The proof of this fact is based on the following lemma,
which is a powerfull tool for measuring the speed of convergence.

\begin{lemma}
\emph{If }$\left( \omega _{n}\right) _{n\geq 1}$ \emph{is convergent to zero}
\emph{and there exists the limit}%
\begin{equation*}
\lim_{n\rightarrow \infty }n^{k}(\omega _{n}-\omega _{n+1})=l\in 
\mathbb{R}
,
\end{equation*}%
\emph{with }$k>1,$ \emph{then there exists the limit:}%
\begin{equation*}
\lim_{n\rightarrow \infty }n^{k-1}\omega _{n}=\frac{l}{k-1}.
\end{equation*}
\end{lemma}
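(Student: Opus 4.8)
The plan is to apply the Stolz--Ces\`aro theorem in its $0/0$ form to the quotient $\omega_{n}/n^{-(k-1)}$. Since $k>1$, the sequence $b_{n}:=n^{-(k-1)}$ is strictly decreasing to zero, and by hypothesis $a_{n}:=\omega_{n}$ also tends to zero, so both the numerator and the denominator of
\begin{equation*}
n^{k-1}\omega_{n}=\frac{\omega_{n}}{n^{-(k-1)}}=\frac{a_{n}}{b_{n}}
\end{equation*}
vanish in the limit. The Stolz--Ces\`aro theorem then reduces the computation of $\lim a_{n}/b_{n}$ to that of $\lim(a_{n}-a_{n+1})/(b_{n}-b_{n+1})$, provided the latter exists.

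First I would analyze the denominator $b_{n}-b_{n+1}=n^{-(k-1)}-(n+1)^{-(k-1)}$. Applying the mean value theorem to $f(t)=t^{-(k-1)}$, whose derivative is $f'(t)=-(k-1)t^{-k}$, yields
\begin{equation*}
n^{-(k-1)}-(n+1)^{-(k-1)}=(k-1)\,\xi_{n}^{-k},\qquad n<\xi_{n}<n+1.
\end{equation*}
Since $n/\xi_{n}\to 1$, multiplying by $n^{k}$ gives $n^{k}\bigl(b_{n}-b_{n+1}\bigr)=(k-1)(n/\xi_{n})^{k}\to k-1$.

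Combining this with the hypothesis $n^{k}(\omega_{n}-\omega_{n+1})\to l$, I would write
\begin{equation*}
\frac{a_{n}-a_{n+1}}{b_{n}-b_{n+1}}=\frac{n^{k}(\omega_{n}-\omega_{n+1})}{n^{k}\bigl(b_{n}-b_{n+1}\bigr)}\longrightarrow\frac{l}{k-1}.
\end{equation*}
The Stolz--Ces\`aro theorem then delivers $\lim_{n\to\infty}n^{k-1}\omega_{n}=l/(k-1)$, as claimed.

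The only delicate point is verifying the hypotheses of Stolz--Ces\`aro (strict monotonicity of $b_{n}$ and the existence of the limit of the difference quotient); the asymptotic estimate for $b_{n}-b_{n+1}$ is the main technical step, but it is routine once the mean value theorem is invoked. An alternative, equally direct route avoids Stolz--Ces\`aro entirely: since the hypothesis forces $\omega_{n}-\omega_{n+1}=O(n^{-k})$ with $k>1$, the telescoping series $\omega_{n}=\sum_{j=n}^{\infty}(\omega_{j}-\omega_{j+1})$ converges, and comparing it term-by-term with $l\sum_{j=n}^{\infty}j^{-k}\sim l/\bigl((k-1)n^{k-1}\bigr)$ yields the same conclusion; here the main obstacle is making the term-by-term comparison uniform, which the Stolz--Ces\`aro argument sidesteps by packaging everything into a single difference quotient.
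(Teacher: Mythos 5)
Your proof is correct: the Stolz--Ces\`aro hypotheses are verified carefully ($b_{n}=n^{-(k-1)}$ strictly decreasing to zero since $k>1$, both sequences null, and the difference quotient handled via the mean value theorem with $n/\xi_{n}\to 1$), and the conclusion follows since the denominator limit $k-1$ is nonzero. The paper itself states this lemma without proof, deferring to the cited reference \cite{m1}, and the standard proof given there is precisely this Stolz--Ces\`aro argument applied to $\omega_{n}/n^{-(k-1)}$, so your approach coincides with the original one.
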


Hence if replace $n+1$ by $n+\frac{11}{12}$ in (\ref{0}), a better
approximation can be obtained. An idea arises naturally: to construct a
series (\ref{S}) in negative powers of $n+\frac{11}{12}.$

This fact will also solve an open problem posed by Yang, who remarked in the
final part of his paper \cite{yang2} that in order to obtain better results,
the right side of (\ref{S}) could be replaced by $e\left[ 1-\sum_{n=1}^{%
\infty }\left( d_{n}/\left( x+\varepsilon \right) ^{n}\right) \right] ,$
where $\varepsilon \in (0,1],$ but informations about values of $\varepsilon 
$ are not provided.

Our above studies show that the value $\varepsilon =\frac{11}{12}$ gives
indeed better results. The same method using Lemma 1 produces the series%
\begin{equation}
\left( 1+\frac{1}{n}\right) ^{n}=e\left( 1-\frac{\frac{1}{2}}{n+\frac{11}{12}%
}-\frac{\frac{5}{288}}{\left( n+\frac{11}{12}\right) ^{3}}-\frac{\frac{139}{%
17\,280}}{\left( n+\frac{11}{12}\right) ^{4}}-\frac{\frac{119}{23\,040}}{%
\left( n+\frac{11}{12}\right) ^{5}}-\cdots \right)  \label{D}
\end{equation}%
which is better than (\ref{S}), since by truncation after $k\geq 3$ terms of
series (\ref{S}), the last term is of order $n^{-\left( k-1\right) },$ while
the last term of series (\ref{D}) truncated after $k$ terms is of order $%
n^{-k}.$

In order to obtain the next coefficient of $\left( n+\frac{11}{12}\right)
^{-2}$ in (\ref{D}), we search the best approximation%
\begin{equation*}
\left( 1+\frac{1}{n}\right) ^{n}\approx e\left( 1-\frac{\frac{1}{2}}{\left(
n+\frac{11}{12}\right) }-\frac{c}{\left( n+\frac{11}{12}\right) ^{2}}\right)
,\ \ \ \text{as \ }n\rightarrow \infty .
\end{equation*}%
Such an approximation is better as the relative error sequence defined by%
\begin{equation*}
\left( 1+\frac{1}{n}\right) ^{n}=e\left( 1-\frac{\frac{1}{2}}{\left( n+\frac{%
11}{12}\right) }-\frac{c}{\left( n+\frac{11}{12}\right) ^{2}}\right) \exp
\omega _{n}\ ,\ \ \ n\geq 1,
\end{equation*}%
converges faster to zero. By using computer algebra, we get%
\begin{equation*}
\omega _{n}-\omega _{n+1}=\frac{2c}{n^{3}}-\left( 7c+\frac{5}{96}\right) 
\frac{1}{n^{4}}+O\left( \frac{1}{n^{5}}\right) .
\end{equation*}%
According to Lemma 1, the fastest sequence $\omega _{n}$ is obtained for $%
c=0.$

With $c=0,$ let us now search the best approximation of the form%
\begin{equation*}
\left( 1+\frac{1}{n}\right) ^{n}\approx e\left( 1-\frac{\frac{1}{2}}{\left(
n+\frac{11}{12}\right) }-\frac{d}{\left( n+\frac{11}{12}\right) ^{3}}\right)
,\ \ \ \text{as \ }n\rightarrow \infty .
\end{equation*}%
For the corresponding relative error sequence $w_{n}$ given by%
\begin{equation*}
\left( 1+\frac{1}{n}\right) ^{n}=e\left( 1-\frac{\frac{1}{2}}{\left( n+\frac{%
11}{12}\right) }-\frac{d}{\left( n+\frac{11}{12}\right) ^{3}}\right) \exp
w_{n}\ ,\ \ \ n\geq 1,
\end{equation*}%
we have%
\begin{equation*}
w_{n}-w_{n+1}=\left( 3d-\frac{5}{96}\right) \frac{1}{n^{4}}+\left( -15d+%
\frac{493}{2160}\right) \frac{1}{n^{5}}+O\left( \frac{1}{n^{6}}\right) .
\end{equation*}%
The fastest sequence $w_{n}$ is obtained when the coefficient of $n^{-4}$
vanishes, that is $d=\frac{5}{288}.$ More coefficients in (\ref{D}) can be
inductively obtained.

\section{The general term of $d_{n}$}

Now it is natural to ask the general term, or at least a recurrence relation
of $d_{n}$ in (\ref{D}), that is%
\begin{equation}
\left( 1+\frac{1}{n}\right) ^{n}=e\left( 1-\sum_{k=1}^{\infty }\frac{d_{k}}{%
\left( n+\frac{11}{12}\right) ^{k}}\right) .  \label{DD}
\end{equation}

By (\ref{D}), we have $d_{1}=\frac{1}{2},$ $d_{2}=0,$ $d_{3}=\frac{5}{288},$ 
$d_{4}=\frac{139}{17\,280},$ $d_{5}=\frac{119}{23\,040},$ $...$ .

One idea for the complete characterization of $d_{n}$ is to provide a
formula in term of $b_{k},$ as we can see in the following

\begin{theorem}
Let $b_{0}=1$ and $b_{n}=\frac{1}{n}\left( -\sum_{k=0}^{n-2}\frac{b_{n-1-k}}{%
k+1}+\frac{1}{n+1}\right) ,$ $n\geq 2.$ Then if%
\begin{equation*}
\left( 1+\frac{1}{m}\right) ^{m}=e\left( d_{0}-\frac{d_{1}}{m+\frac{11}{12}}-%
\frac{d_{2}}{\left( m+\frac{11}{12}\right) ^{2}}-\cdots \right) ,
\end{equation*}%
then $d_{0}=1$ and%
\begin{equation*}
d_{s}=\Gamma \left( s\right) \sum_{k=1}^{s}\left( -1\right) ^{s-k}\frac{b_{k}%
}{\Gamma \left( s-k+1\right) \Gamma \left( k\right) 12^{s-k}},\ \ \
s=1,2,3,\ldots .
\end{equation*}
\end{theorem}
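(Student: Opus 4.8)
The plan is to use the fact that both expansions represent the \emph{same} function $\left(1+1/x\right)^{x}$. Writing the two series side by side and cancelling the common factor $e$ together with the constant term (which immediately yields $d_{0}=1$, since both sides tend to $e$ as $x\to\infty$), the statement reduces to the single identity
\[
\sum_{k=1}^{\infty}\frac{b_{k}}{\left(x+1\right)^{k}}=\sum_{k=1}^{\infty}\frac{d_{k}}{\left(x+\frac{11}{12}\right)^{k}}.
\]
Since the $b_{k}$ are known, everything comes down to re-expanding the left-hand series, whose base point is $x+1$, in negative powers of the shifted base point $x+\frac{11}{12}$ and reading off the coefficient of each power. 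Accordingly, the first step is to substitute $y=x+\frac{11}{12}$, so that $x+1=y+\frac{1}{12}$, turning the identity into $\sum_{k\ge1}b_{k}\left(y+\frac{1}{12}\right)^{-k}=\sum_{k\ge1}d_{k}\,y^{-k}$.

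Next I would expand each term on the left by the binomial series
\[
\frac{1}{\left(y+\frac{1}{12}\right)^{k}}=\frac{1}{y^{k}}\left(1+\frac{1}{12y}\right)^{-k}=\sum_{j=0}^{\infty}(-1)^{j}\binom{k+j-1}{j}\frac{1}{12^{j}\,y^{k+j}},
\]
valid for $\left|12y\right|>1$. Substituting this into the left-hand series and collecting the coefficient of $y^{-s}$ forces $k+j=s$, i.e. $j=s-k$ with $1\le k\le s$, which gives
\[
d_{s}=\sum_{k=1}^{s}(-1)^{s-k}\binom{s-1}{k-1}\frac{b_{k}}{12^{\,s-k}}.
\]
Finally, rewriting $\binom{s-1}{k-1}=\Gamma(s)\big/\bigl(\Gamma(s-k+1)\,\Gamma(k)\bigr)$ produces exactly the asserted formula, so the algebraic heart of the argument is just this one binomial re-expansion.

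The one point genuinely requiring care is the rearrangement of the resulting double series in $k$ and $j$ into a single series ordered by the total degree $s=k+j$; this interchange is the step I expect to be the main obstacle rather than the algebra. To license it I would check absolute convergence: since all $b_{k}>0$, summing the moduli of the binomial terms collapses by $\sum_{j\ge0}\binom{k+j-1}{j}\left(12y\right)^{-j}=\bigl(1-\frac{1}{12y}\bigr)^{-k}$ to
\[
\sum_{k=1}^{\infty}\frac{b_{k}}{\left(y-\frac{1}{12}\right)^{k}}=\sum_{k=1}^{\infty}\frac{b_{k}}{\left(x+\frac{5}{6}\right)^{k}},
\]
which converges for all sufficiently large $x$ because the series $\sum b_{k}\left(x+1\right)^{-k}$ has a positive radius of convergence in $1/(x+\mathrm{const})$. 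Thus the interchange is valid on a neighbourhood of $y=\infty$, both sides are analytic functions of $1/y$ that agree there, and the uniqueness of the coefficients of a convergent power series in $1/y$ upgrades the term-by-term matching to the genuine equality $d_{s}=\sum_{k=1}^{s}(-1)^{s-k}\binom{s-1}{k-1}b_{k}\,12^{-(s-k)}$, completing the proof.
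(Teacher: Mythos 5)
Your proof is correct and takes essentially the same route as the paper: the paper likewise expands $(m+1)^{-k}$ as a binomial series in negative powers of $m+\frac{11}{12}$ (written there as $\left(1+\frac{1}{12}t\right)^{-k}=\sum_{n\geq 0}(-1)^{n}\frac{\Gamma(n+k)}{\Gamma(k)\Gamma(n+1)12^{n}}t^{n}$ with $t=(m+\tfrac{11}{12})^{-1}$, which is exactly your $\binom{k+j-1}{j}$ expansion), collects terms of total degree $s=k+n$, and identifies coefficients. The only difference is that you explicitly justify the rearrangement of the double series via absolute convergence and uniqueness of coefficients, a point the paper passes over in silence, so your write-up is if anything slightly more complete.
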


\begin{proof}
First by the binomial formula, we have%
\begin{equation*}
\left( 1+\frac{1}{12}t\right) ^{-k}=\sum_{n=0}^{\infty }\left( -1\right) ^{n}%
\frac{\Gamma \left( n+k\right) }{\Gamma \left( k\right) \Gamma \left(
n+1\right) 12^{n}}t^{n},
\end{equation*}%
and with $t=\left( m+\frac{11}{12}\right) ^{-1},$%
\begin{equation*}
\left( 1-\frac{\frac{1}{12}}{m+1}\right) ^{k}=\sum_{n=0}^{\infty }\left(
-1\right) ^{n}\frac{\Gamma \left( n+k\right) }{\Gamma \left( k\right) \Gamma
\left( n+1\right) 12^{n}}\frac{1}{\left( m+\frac{11}{12}\right) ^{n}}.
\end{equation*}%
Now%
\begin{eqnarray*}
\sum_{k=0}^{\infty }\frac{b_{k}}{\left( m+1\right) ^{k}} &=&\sum_{k=0}^{%
\infty }\left( 1-\frac{\frac{1}{12}}{m+1}\right) ^{k}\frac{b_{k}}{\left( m+%
\frac{11}{12}\right) ^{k}} \\
&=&\sum_{k=0}^{\infty }\sum_{n=0}^{\infty }\left( -1\right) ^{n}\frac{\Gamma
\left( k+n\right) }{\Gamma \left( k\right) \Gamma \left( n+1\right) 12^{n}}%
\frac{b_{k}}{\left( m+\frac{11}{12}\right) ^{n+k}} \\
&=&\sum_{s=0}^{\infty }\frac{d_{s}}{\left( m+\frac{11}{12}\right) ^{s}},
\end{eqnarray*}%
where%
\begin{equation*}
d_{s}=\sum_{k+n=s}\left( -1\right) ^{n}\frac{\Gamma \left( k+n\right) }{%
\Gamma \left( k\right) \Gamma \left( n+1\right) 12^{n}}b_{k}.
\end{equation*}%
Now the conclusion follows by identifying the coefficients in%
\begin{eqnarray*}
\left( 1+\frac{1}{m}\right) ^{m} &=&e\left( b_{0}-\frac{b_{1}}{m+1}-\frac{%
b_{2}}{\left( m+1\right) ^{2}}-\cdots \right) \\
&=&e\left( d_{0}-\frac{d_{1}}{m+\frac{11}{12}}-\frac{d_{2}}{\left( m+\frac{11%
}{12}\right) ^{2}}-\cdots \right) .
\end{eqnarray*}
\end{proof}

We concentrate now to give a recurrence relation for sequence $d_{n}.$ First
we state the following

\begin{lemma}
Let%
\begin{equation*}
g\left( t\right) =\left( \frac{1-\frac{11}{12}t}{1+\frac{1}{12}t}\right) ^{%
\frac{11}{12}-\frac{1}{t}},\ \ \ 0<t<1.
\end{equation*}%
Then%
\begin{equation}
g\left( t\right) =e\left( c_{0}+c_{1}t+c_{2}t^{2}+...\right) ,  \label{dd}
\end{equation}%
where $c_{0}=1$ and%
\begin{equation*}
c_{n}=\frac{1}{n}\sum_{k=0}^{n-1}a_{n-k-1}c_{k},
\end{equation*}%
where%
\begin{equation*}
a_{n}=\frac{n+1}{12^{n+2}}\left( \frac{\left( -1\right) ^{n+1}11-11^{n+2}}{%
n+1}-\frac{\left( -1\right) ^{n}-11^{n+2}}{n+2}\right) .
\end{equation*}
\end{lemma}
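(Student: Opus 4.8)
The plan is to pass to logarithms and reduce the statement to the standard recurrence for the coefficients of the exponential of a power series. Set $h(t)=\ln g(t)-1$. For $0<t<1$ both $\tfrac{11}{12}t$ and $\tfrac1{12}t$ lie strictly in $(0,1)$, so $g$ is positive and analytic there; thus, once I show that $\ln g$ is analytic at $t=0$ with $\ln g(0)=1$, I get $g(t)=e\exp h(t)$ with $h(0)=0$, hence $g(t)/e=\sum_{n\ge 0}c_n t^n$ with $c_0=1$. Writing $f=g/e=\exp h$, the identity $f'=h'f$ gives, on equating coefficients of $t^{n-1}$, exactly the asserted recurrence $c_n=\tfrac1n\sum_{k=0}^{n-1}a_{n-k-1}c_k$, where $a_j$ is the coefficient of $t^j$ in $h'(t)$. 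So everything reduces to computing the Taylor coefficients of $\ln g$ and then of its derivative.

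To compute $\ln g$, I would expand
\[
\ln g(t)=\Bigl(\tfrac{11}{12}-\tfrac1t\Bigr)\bigl[\ln(1-\tfrac{11}{12}t)-\ln(1+\tfrac1{12}t)\bigr]
\]
using $\ln(1+x)=\sum_{m\ge 1}\frac{(-1)^{m-1}}{m}x^m$, valid here by the bounds above. This yields $L(t):=\ln(1-\tfrac{11}{12}t)-\ln(1+\tfrac1{12}t)=-\sum_{m\ge 1}\frac{11^{m}+(-1)^{m-1}}{m\,12^{m}}t^{m}$. Since $L(0)=0$, multiplying by $-\tfrac1t$ merely shifts the series down one degree (so the apparent pole is removable) and produces the constant term $\frac{11+1}{12}=1$, which is precisely what accounts for the factor $e$; multiplying by $\tfrac{11}{12}$ leaves a genuine power series in $t$. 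Collecting the two contributions gives $\ln g(t)=1+\sum_{n\ge 1}\alpha_n t^n$, where for $n\ge 1$
\[
\alpha_n=\frac{11^{n+1}+(-1)^{n}}{(n+1)\,12^{n+1}}-\frac{11^{n+1}-11(-1)^{n}}{n\,12^{n+1}} .
\]
Termwise differentiation then gives $h'(t)=\sum_{n\ge 1}n\alpha_n t^{n-1}=\sum_{j\ge 0}(j+1)\alpha_{j+1}t^{j}$, so that $a_j=(j+1)\alpha_{j+1}$.

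The remaining step, which is the only place needing care, is to check that $a_n=(n+1)\alpha_{n+1}$ agrees with the closed form in the lemma. This is a routine simplification: using $(-1)^{n+1}=-(-1)^n$, the term of $(n+1)\alpha_{n+1}$ coming from the denominator $n+1$ reproduces $\frac{(-1)^{n+1}11-11^{n+2}}{12^{n+2}}$, while the term coming from $n+2$ reproduces $-\frac{(n+1)\bigl((-1)^n-11^{n+2}\bigr)}{(n+2)\,12^{n+2}}$, and together these are exactly the bracketed expression multiplied by $\tfrac{n+1}{12^{n+2}}$. I expect the main obstacle to be purely bookkeeping: tracking the two separate sources of each power of $t$ (the $\tfrac{11}{12}L$ term and the shifted $-\tfrac1t L$ term) and matching signs in the final formula. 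The analytic content—absolute convergence of all series on $0<t<1$, the removability of the $1/t$ singularity, and legitimacy of termwise differentiation—is immediate from the elementary bounds above.
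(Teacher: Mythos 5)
Your proposal is correct and follows essentially the same route as the paper: expand $\ln g$ in a Maclaurin series, differentiate to get $g'/g=\sum_n a_n t^n$, and extract the recurrence for the $c_n$ from the identity $g'=\varphi g$ (you do this by equating Cauchy-product coefficients, the paper by applying the Leibniz rule to $g^{(n)}(0)$ — the same computation in different notation). Your write-up actually supplies the expansion of $\ln g$ and the identification $a_n=(n+1)\alpha_{n+1}$ explicitly, details the paper leaves to the reader.
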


\begin{proof}
Using Maclaurin series, we have%
\begin{equation*}
\ln g\left( t\right) =1+\sum_{n=1}^{\infty }\frac{1}{12^{n+1}}\left( \frac{%
\left( -1\right) ^{n}11-11^{n+1}}{n}-\frac{\left( -1\right) ^{n-1}-11^{n+1}}{%
n+1}\right) t^{n},
\end{equation*}%
thus%
\begin{equation*}
\frac{g^{\prime }\left( t\right) }{g\left( t\right) }=\sum_{n=0}^{\infty }%
\frac{n+1}{12^{n+2}}\left( \frac{\left( -1\right) ^{n+1}11-11^{n+2}}{n+1}-%
\frac{\left( -1\right) ^{n}-11^{n+2}}{n+2}\right) t^{n}.
\end{equation*}%
Now we can denote $g^{\prime }\left( t\right) =g\left( t\right) \varphi
\left( t\right) ,$ where%
\begin{equation*}
\varphi \left( t\right) =\sum_{n=0}^{\infty }a_{n}t^{n},\ \ \ a_{n}=\frac{n+1%
}{12^{n+2}}\left( \frac{\left( -1\right) ^{n+1}11-11^{n+2}}{n+1}-\frac{%
\left( -1\right) ^{n}-11^{n+2}}{n+2}\right) .
\end{equation*}%
Thanks to Leibniz rule,%
\begin{equation*}
g^{\left( n\right) }\left( t\right) =\sum_{k=0}^{n-1}\left( 
\begin{array}{c}
n-1 \\ 
k%
\end{array}%
\right) g^{\left( n-k-1\right) }\left( t\right) \varphi ^{\left( k\right)
}\left( t\right) ,
\end{equation*}%
but $\varphi ^{\left( k\right) }\left( 0\right) =k!a_{k},$ so%
\begin{equation*}
g^{\left( n\right) }\left( 0\right) =\sum_{k=0}^{n-1}\frac{\left( n-1\right)
!}{\left( n-k-1\right) !}a_{k}g^{\left( n-k-1\right) }\left( 0\right) .
\end{equation*}%
Now%
\begin{eqnarray*}
g\left( t\right) &=&\sum_{n=0}^{\infty }\frac{g^{\left( n\right) }\left(
0\right) }{n!}t^{n}=\sum_{n=0}^{\infty }\left( \sum_{k=0}^{n-1}\frac{\left(
n-1\right) !}{\left( n-k-1\right) !}a_{k}g^{\left( n-k-1\right) }\left(
0\right) \right) \frac{t^{n}}{n!} \\
&=&\sum_{n=0}^{\infty }\left( \frac{1}{n}\sum_{k=0}^{n-1}\frac{g^{\left(
k\right) }\left( 0\right) }{k!}a_{n-k-1}\right) t^{n} \\
&=&e\sum_{n=0}^{\infty }c_{n}t^{n},
\end{eqnarray*}%
where%
\begin{equation*}
c_{n}=\frac{1}{ne}\sum_{k=0}^{n-1}\frac{g^{\left( k\right) }\left( 0\right) 
}{k!}a_{n-k-1}
\end{equation*}%
As%
\begin{equation*}
c_{n}=\frac{g^{\left( n\right) }\left( 0\right) }{n!e}
\end{equation*}%
we get%
\begin{equation*}
g^{\left( n\right) }\left( 0\right) =\left( n-1\right) !\sum_{k=0}^{n-1}%
\frac{g^{\left( k\right) }\left( 0\right) }{k!}a_{n-k-1},
\end{equation*}%
or%
\begin{equation*}
c_{n}=\frac{1}{n}\sum_{k=0}^{n-1}c_{k}a_{n-k-1},
\end{equation*}%
which is the conclusion.
\end{proof}

By taking $t=\left( m+\frac{11}{12}\right) ^{-1}$ in (\ref{dd}), we obtain
the following

\begin{theorem}
The following representation holds true%
\begin{equation*}
\left( 1+\frac{1}{m}\right) ^{m}=e\left( 1-\frac{d_{1}}{m+\frac{11}{12}}-%
\frac{d_{2}}{\left( m+\frac{11}{12}\right) ^{2}}-\frac{d_{3}}{\left( m+\frac{%
11}{12}\right) ^{3}}-\cdots \right) ,
\end{equation*}%
and%
\begin{equation*}
d_{n}=\frac{1}{n}\sum_{k=0}^{n-1}\frac{n-k}{12^{n-k+1}}\left( \frac{\left(
-1\right) ^{n-k}11-11^{n-k+1}}{n-k}-\frac{\left( -1\right)
^{n-k-1}-11^{n-k+1}}{n-k+1}\right) d_{k}
\end{equation*}%
(here $d_{0}=-1$).
\end{theorem}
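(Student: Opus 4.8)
The plan is to read off both the representation and the recurrence directly from the preceding lemma, via the substitution $t=\left(m+\frac{11}{12}\right)^{-1}$ announced just above the statement. First I would evaluate $g$ at this value of $t$. Since then $\frac1t=m+\frac{11}{12}$, the exponent collapses to $\frac{11}{12}-\frac1t=-m$, and a one-line simplification gives
\[
\frac{1-\frac{11}{12}t}{1+\frac{1}{12}t}=\frac{m}{m+1}.
\]
Hence $g(t)=\left(\frac{m}{m+1}\right)^{-m}=\left(1+\frac1m\right)^{m}$. For every integer $m\geq1$ one has $m+\frac{11}{12}>1$, so $t\in(0,1)$ and the preceding lemma applies.

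Next I would substitute $t=\left(m+\frac{11}{12}\right)^{-1}$ into the expansion $g(t)=e\left(c_{0}+c_{1}t+c_{2}t^{2}+\cdots\right)$ of that lemma, obtaining
\[
\left(1+\frac1m\right)^{m}=e\sum_{n=0}^{\infty}\frac{c_{n}}{\left(m+\frac{11}{12}\right)^{n}}.
\]
Comparing this with the asserted form $e\bigl(1-\sum_{n\geq1}d_{n}\left(m+\frac{11}{12}\right)^{-n}\bigr)$ and using $c_{0}=1$ identifies $d_{n}=-c_{n}$ for $n\geq1$, which already establishes the representation.

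It then remains only to convert the recurrence $c_{n}=\frac1n\sum_{k=0}^{n-1}c_{k}a_{n-k-1}$ into one for the $d_{n}$. Splitting off the $k=0$ term and using $c_{0}=1$ together with $c_{k}=-d_{k}$ for $k\geq1$ gives
\[
d_{n}=-c_{n}=\frac1n\left(\sum_{k=1}^{n-1}d_{k}a_{n-k-1}-a_{n-1}\right).
\]
Declaring $d_{0}:=-1$ absorbs the isolated term $-a_{n-1}$ back into the sum, yielding exactly $d_{n}=\frac1n\sum_{k=0}^{n-1}d_{k}a_{n-k-1}$; writing $a_{n-k-1}$ out with the formula of the preceding lemma then produces the displayed expression.

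The computations here are short, so the only genuinely delicate point is the sign bookkeeping of $d_{0}$: the symbol appears with value $1$ in the expansion (as the leading coefficient $c_{0}$) but must be reset to $-1$ as a purely formal device so that the inhomogeneous recurrence for $(d_{n})_{n\geq1}$ can be written homogeneously. I expect this reconciliation---and flagging clearly that the two uses of $d_{0}$ are distinct---to be the main thing to get right, the rest being routine substitution and coefficient matching.
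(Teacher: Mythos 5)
Your proposal is correct and follows exactly the paper's route: the paper proves this theorem in a single line by substituting $t=\left(m+\frac{11}{12}\right)^{-1}$ into the expansion of Lemma~2, which is precisely what you do. Your additional bookkeeping---verifying $g(t)=\left(1+\frac{1}{m}\right)^{m}$, identifying $d_{n}=-c_{n}$, and explaining the formal convention $d_{0}=-1$ that homogenizes the recurrence---fills in details the paper leaves implicit.
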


In the last part of this paper we give an integral representation of $d_{n}.$
To do this, we make appeal to the following result stated in \cite{alzer}.

\begin{lemma}
Let%
\begin{equation*}
h(x)=(x+1)\left[ e-\left( 1+\frac{1}{x}\right) ^{x}\right] \qquad (x>0).
\end{equation*}%
Then we have%
\begin{equation*}
h(x)=\frac{e}{2}+\frac{1}{\pi }\int_{0}^{1}\frac{s^{s}(1-s)^{1-s}\sin (\pi {s%
})}{x+s}ds.
\end{equation*}
\end{lemma}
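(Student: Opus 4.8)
The plan is to recognize the claimed formula as a Stieltjes-type representation and to recover it by contour integration. Write $\tilde{h}(x)=h(x)-e/2$. First I would record the asymptotics $\left(1+1/z\right)^{z}=e\left(1-\tfrac{1}{2z}+O(z^{-2})\right)$ as $|z|\to\infty$, which gives $\tilde h(z)=O(1/z)$. Next observe that $z\mapsto(1+1/z)^{z}=\exp\!\big(z\log(1+1/z)\big)$ is single valued and analytic on $\mathbb{C}\setminus[-1,0]$, since $1+1/z$ is a negative real number exactly when $z\in(-1,0)$; hence $\tilde h$ is analytic on $\mathbb{C}\setminus[-1,0]$ and real on $(0,\infty)$. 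For fixed $x>0$ I would apply Cauchy's formula on the region bounded by a large circle $C_{R}$ and a dog-bone contour $\gamma$ encircling $[-1,0]$. Because $\tilde h(z)=O(1/z)$, the circle contributes $O(1/R)\to0$, and what survives is the jump of $\tilde h$ across the cut:
\[
\tilde h(x)=\frac{1}{\pi}\int_{-1}^{0}\frac{\mathrm{Im}\,\tilde h(t+i0)}{t-x}\,dt,
\]
where I use Schwarz reflection ($\tilde h(\bar z)=\overline{\tilde h(z)}$) to write the jump as $2i\,\mathrm{Im}\,\tilde h(t+i0)$. The substitution $t=-s$ then turns this into an integral of the required shape $\int_{0}^{1}(\,\cdot\,)/(x+s)\,ds$.

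The remaining computation is the boundary value. For $z=-s+i\varepsilon$ with $0<s<1$ and $\varepsilon\downarrow0$, the quantity $1+1/z$ approaches the negative number $-(1-s)/s$ from the lower half-plane, so $\log(1+1/z)\to\ln\frac{1-s}{s}-i\pi$. Hence
\[
\left(1+\tfrac1z\right)^{z}\longrightarrow\exp\!\Big(-s\ln\tfrac{1-s}{s}+i\pi s\Big)=\Big(\tfrac{s}{1-s}\Big)^{s}\big(\cos\pi s+i\sin\pi s\big),
\]
and since $z+1\to1-s$ is real I would obtain $\mathrm{Im}\,\tilde h(-s+i0)=-(1-s)\big(\tfrac{s}{1-s}\big)^{s}\sin\pi s=-\,s^{s}(1-s)^{1-s}\sin\pi s$. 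Feeding this into the displayed formula and tracking the signs from $t-x=-(x+s)$ and $dt=-ds$ yields exactly
\[
\tilde h(x)=\frac{1}{\pi}\int_{0}^{1}\frac{s^{s}(1-s)^{1-s}\sin(\pi s)}{x+s}\,ds,
\]
which is the assertion after adding back $e/2$.

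The main obstacle is not the algebra but the rigorous justification of collapsing the dog-bone onto the cut, and this hinges on the endpoint behaviour. The integrand $s^{s}(1-s)^{1-s}\sin\pi s$ vanishes at both $s=0$ and $s=1$, so convergence of the integral is never in doubt; the delicate point is that $(1+1/z)^{z}$ is \emph{unbounded} near $z=-1$, where $(1+1/z)^{z}\sim-1/(z+1)$. The decisive observation is that the prefactor $z+1$ in $h$ cancels this singularity, so $h$ extends continuously to $z=-1$ with $h(-1)=1$; near $z=0$ one has $(1+1/z)^{z}\to1$ and $h$ is likewise bounded. Thus the two small circles of the dog-bone at $z=-1$ and $z=0$ contribute $O(\text{radius})\to0$, and the deformation is legitimate. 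I would therefore spend most of the effort verifying these two local estimates together with the uniformity of the large-$|z|$ expansion away from the cut; once those are in place, the identity drops out of the Cauchy representation.
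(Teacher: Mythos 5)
Your proposal is correct, but there is nothing in the paper to compare it against: the paper does not prove this lemma at all --- it is quoted verbatim from Alzer and Berg (reference \emph{Some classes of completely monotonic functions}, Ann.\ Acad.\ Sci.\ Fennicae 27 (2002)), with the phrase ``we make appeal to the following result stated in [Alzer--Berg].'' So you have supplied a self-contained proof of something the paper only cites. Your argument is the standard Stieltjes/Pick boundary-value derivation that underlies the cited result, and the key computations all check: $(1+1/z)^{z}=\exp\bigl(z\log(1+1/z)\bigr)$ is single-valued and analytic off $[-1,0]$ because $(z+1)/z\in(-\infty,0]$ exactly when $z\in[-1,0]$; the expansion $\log(1+1/z)=1/z-1/(2z^{2})+O(z^{-3})$ holds uniformly for $|z|\ge 2$, so $\tilde h(z)=O(1/z)$ uniformly and the large circle drops out; Schwarz reflection is legitimate since the domain is symmetric and $\tilde h$ is real on $(0,\infty)$; the delicate sign is the boundary value, and you got it right --- for $z=-s+i\varepsilon$ one checks $\operatorname{Im}\bigl((z+1)/z\bigr)=-\varepsilon/(s^{2}+\varepsilon^{2})<0$, so $1+1/z$ approaches $-(1-s)/s$ from below, the principal logarithm tends to $\ln\frac{1-s}{s}-i\pi$, and $\operatorname{Im}h(-s+i0)=-(1-s)\bigl(\tfrac{s}{1-s}\bigr)^{s}\sin\pi s=-s^{s}(1-s)^{1-s}\sin\pi s$; finally the endpoint analysis is exactly the right technical point, since near $z=-1$ one has $(1+1/z)^{z}=-\tfrac{1}{z+1}-\log\bigl(-(z+1)\bigr)+o(1)$, so the prefactor $z+1$ cancels the singularity and $h\to 1$, while near $z=0$ one has $z\log(1+1/z)\to 0$ and $h\to e-1$, making both end-circles of the dog-bone contribute $O(\text{radius})$. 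Tracking the orientation and the substitution $t=-s$ as you do produces the stated identity with the correct (positive) sign, consistent with $h(x)>e/2$. In short: the proof is complete in outline and correct in every computation; it is not a ``different route from the paper'' so much as the proof the paper chose to outsource.
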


\begin{theorem}
Let $d_{n}$ be the sequence defined by (\ref{D}), and let%
\begin{equation*}
g(s)=\frac{1}{\pi }{s^{s}}(1-s)^{1-s}sin(\pi {s}).
\end{equation*}%
Then%
\begin{equation*}
d_{n}=\frac{\left( -1\right) ^{n}}{12^{n-1}}\left( -\frac{1}{2}+\frac{1}{e}%
\int_{0}^{1}\frac{\left( 12s-11\right) ^{n-1}-1}{s-1}g\left( s\right)
ds\right) ,\ \ \ n=2,3,\ldots .
\end{equation*}
\end{theorem}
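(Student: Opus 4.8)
The plan is to substitute Alzer's integral representation (Lemma~3) into the defining expansion (\ref{DD}) and then read off the coefficient of each negative power of $x+\frac{11}{12}$. First I would rewrite (\ref{DD}) as a statement about the remainder,
\[
e-\left(1+\frac{1}{x}\right)^{x}=e\sum_{k=1}^{\infty}\frac{d_{k}}{\left(x+\frac{11}{12}\right)^{k}},
\]
and combine it with Lemma~3, which gives
\[
e-\left(1+\frac{1}{x}\right)^{x}=\frac{h(x)}{x+1}=\frac{1}{x+1}\left(\frac{e}{2}+\int_{0}^{1}\frac{g(s)}{x+s}\,ds\right).
\]
Setting $u=x+\frac{11}{12}$, so that $x+1=u+\frac{1}{12}$ and $x+s=u+\left(s-\frac{11}{12}\right)$, these combine into the exact master identity
\[
e\sum_{k=1}^{\infty}\frac{d_{k}}{u^{k}}=\frac{e/2}{u+\frac{1}{12}}+\int_{0}^{1}\frac{g(s)}{\left(u+\frac{1}{12}\right)\left(u+s-\frac{11}{12}\right)}\,ds,
\]
valid for all sufficiently large $x$.

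The key algebraic step is to clear the product in the integrand by partial fractions. The two poles sit at $u=-\frac{1}{12}$ and $u=\frac{11}{12}-s$, and the separation of their shifts is exactly $\left(s-\frac{11}{12}\right)-\frac{1}{12}=s-1$, so
\[
\frac{1}{\left(u+\frac{1}{12}\right)\left(u+s-\frac{11}{12}\right)}=\frac{1}{s-1}\left(\frac{1}{u+\frac{1}{12}}-\frac{1}{u+s-\frac{11}{12}}\right).
\]
This is precisely where the factor $\frac{1}{s-1}$ in the claimed formula originates, and it reduces the integrand to a difference of simple kernels in $u$.

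Finally I would expand each simple kernel as $\frac{1}{u+\alpha}=\sum_{j\ge 0}(-1)^{j}\alpha^{j}u^{-j-1}$ and extract the coefficient of $u^{-n}$. The term $\frac{e/2}{u+1/12}$ contributes $\frac{e}{2}\cdot\frac{(-1)^{n-1}}{12^{n-1}}$, while the integral contributes
\[
(-1)^{n-1}\int_{0}^{1}\frac{g(s)}{s-1}\left(\frac{1}{12^{n-1}}-\left(s-\frac{11}{12}\right)^{n-1}\right)ds.
\]
Writing $\left(s-\frac{11}{12}\right)^{n-1}=\frac{(12s-11)^{n-1}}{12^{n-1}}$, so that $\frac{1}{12^{n-1}}-\left(s-\frac{11}{12}\right)^{n-1}=-\frac{(12s-11)^{n-1}-1}{12^{n-1}}$, and equating the total with the coefficient $e\,d_{n}$ on the left, then dividing by $e$ and using $(-1)^{n-1}=-(-1)^{n}$, yields exactly the stated formula.

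The main obstacle is analytic rather than algebraic: I must justify interchanging the integral with the power-series expansion of the kernel and the legitimacy of matching coefficients term by term. This is where the hypotheses on $g$ and the range $n\ge 2$ enter. Since $g$ is continuous and bounded on $[0,1]$ with $g(0)=g(1)=0$, and since $\left|s-\frac{11}{12}\right|\le\frac{11}{12}<u$ for all $s\in[0,1]$ once $u>\frac{11}{12}$ (i.e.\ $x>0$), the geometric expansions converge uniformly in $s$; dominated convergence then legitimizes the interchange, and uniqueness of the resulting convergent expansion in powers of $1/u$ lets me identify coefficients. I would also verify that $\frac{(12s-11)^{n-1}-1}{s-1}$ is a polynomial in $s$ (removable singularity at $s=1$), so that the final integral is well defined.
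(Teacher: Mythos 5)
Your proposal is correct and takes essentially the same route as the paper: both combine Lemma 3 with the expansion of Theorem 2, recenter at $x+\frac{11}{12}$ (the paper via $t=\left(x+\frac{11}{12}\right)^{-1}$, you via $u=x+\frac{11}{12}$), and extract the coefficient of each power, with the factor $\frac{1}{s-1}$ arising from the same partial-fraction split. The paper compresses your explicit partial-fraction decomposition, coefficient matching, and convergence justification into the single phrase ``Differentiation gives,'' so your write-up is just a more detailed rendering of the identical argument.
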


\begin{proof}
By Lemma 3 , we have%
\begin{equation*}
e-\left( 1+\frac{1}{x}\right) ^{x}=\frac{e}{2}\frac{1}{1+x}+\int_{0}^{1}%
\frac{g\left( s\right) }{\left( x+1\right) \left( x+s\right) }ds.
\end{equation*}%
Thus, from Theorem 2 , we have 
\begin{equation*}
e\left( \frac{d_{1}}{x+\frac{11}{12}}+\frac{d_{2}}{\left( x+\frac{11}{12}%
\right) ^{2}}+...\right) =\frac{e}{2}\frac{1}{1+x}+\int_{0}^{1}\frac{g\left(
s\right) }{\left( x+1\right) \left( x+s\right) }ds.
\end{equation*}%
With $t=\left( x+\frac{11}{12}\right) ^{-1},$ we obtain%
\begin{equation*}
e\left( d_{1}t+d_{2}t^{2}+...\right) =\frac{e}{2}\frac{12t}{12+t}%
+\int_{0}^{1}\frac{144t^{2}}{12+t}\frac{g\left( s\right) }{12+12st-11t}ds.
\end{equation*}%
Differentiation gives%
\begin{equation*}
d_{n}=\frac{\left( -1\right) ^{n}}{12^{n-1}}\left( -\frac{1}{2}+\frac{1}{e}%
\int_{0}^{1}\frac{\left( 12s-11\right) ^{n-1}-1}{s-1}g\left( s\right)
ds\right) ,\ \ \ n=2,3,...
\end{equation*}%
This completes the proof of Theorem 3.
\end{proof}


\begin{thebibliography}{9}
\bibitem{alzer} H. Alzer, C. Berg, Some classes of completely monotonic
functions, Ann. Acad. Sci. Fennicae, 27 (2002), 445-460.

\bibitem{bicheng} Y. Bicheng and L. Debnath, Some inequalities involving the
constant $e$ and an application to Carleman's inequlity. J. Math. Anal.
Appl., 223 (1998), 347--353.

\bibitem{gy} M. Gyllenberg, P. Yan, On a conjecture by Yang, J. Math. Anal.
Appl. 264 (2001) 687--690.

\bibitem{carl} G.H. Hardy, J.E. Littlewood and G. Polya, Inequalities,
Cambridge Univ. Press, London, 1952.

\bibitem{m1} C. Mortici, Refinements of some bounds related to the constant $%
e,$ Miskolc Math. Notes, (2011), in press.

\bibitem{ping} Y. Ping and S. Guozheng, A Strengthened Carleman's
inequality. J. Math. Anal. Appl., 240 (1999), 290--293.

\bibitem{yang} X.Yang, On Carleman's inequality, J. Math. Anal. Appl., 253
(2001), 691--694.

\bibitem{yang2} X. Yang, Approximations for constant $e$ and their
applications, J. Math. Anal. Appl., 262 (2001), 651--659

\bibitem{yue} H. Yue, A Strengthened Carleman's Inequality, Comm. Math.
Anal., 1 (2006), no. 2, 115-119.
\end{thebibliography}
\end{document}